\begin{document}
\newtheorem{problem}{Problem}
\newtheorem{theorem}{Theorem}
\newtheorem{lemma}[theorem]{Lemma}
\newtheorem{claim}[theorem]{Claim}
\newtheorem{cor}[theorem]{Corollary}
\newtheorem{prop}[theorem]{Proposition}
\newtheorem{definition}{Definition}
\newtheorem{question}[theorem]{Question}

\def\cA{{\mathcal A}}
\def\cB{{\mathcal B}}
\def\cC{{\mathcal C}}
\def\cD{{\mathcal D}}
\def\cE{{\mathcal E}}
\def\cF{{\mathcal F}}
\def\cG{{\mathcal G}}
\def\cH{{\mathcal H}}
\def\cI{{\mathcal I}}
\def\cJ{{\mathcal J}}
\def\cK{{\mathcal K}}
\def\cL{{\mathcal L}}
\def\cM{{\mathcal M}}
\def\cN{{\mathcal N}}
\def\cO{{\mathcal O}}
\def\cP{{\mathcal P}}
\def\cQ{{\mathcal Q}}
\def\cR{{\mathcal R}}
\def\cS{{\mathcal S}}
\def\cT{{\mathcal T}}
\def\cU{{\mathcal U}}
\def\cV{{\mathcal V}}
\def\cW{{\mathcal W}}
\def\cX{{\mathcal X}}
\def\cY{{\mathcal Y}}
\def\cZ{{\mathcal Z}}

\def\A{{\mathbb A}}
\def\B{{\mathbb B}}
\def\C{{\mathbb C}}
\def\D{{\mathbb D}}
\def\E{{\mathbb E}}
\def\F{{\mathbb F}}
\def\G{{\mathbb G}}
\def\I{{\mathbb I}}
\def\J{{\mathbb J}}
\def\K{{\mathbb K}}
\def\L{{\mathbb L}}
\def\M{{\mathbb M}}
\def\N{{\mathbb N}}
\def\O{{\mathbb O}}
\def\P{{\mathbb P}}
\def\Q{{\mathbb Q}}
\def\R{{\mathbb R}}
\def\S{{\mathbb S}}
\def\T{{\mathbb T}}
\def\U{{\mathbb U}}
\def\V{{\mathbb V}}
\def\W{{\mathbb W}}
\def\X{{\mathbb X}}
\def\Y{{\mathbb Y}}
\def\Z{{\mathbb Z}}

\def\ep{{\mathbf{e}}_p}
\def\em{{\mathbf{e}}_m}
\def\eq{{\mathbf{e}}_q}

\def\scr{\scriptstyle}
\def\\{\cr}
\def\({\left(}
\def\){\right)}
\def\[{\left[}
\def\]{\right]}
\def\<{\langle}
\def\>{\rangle}
\def\fl#1{\left\lfloor#1\right\rfloor}
\def\rf#1{\left\lceil#1\right\rceil}
\def\le{\leqslant}
\def\ge{\geqslant}
\def\eps{\varepsilon}
\def\mand{\qquad\mbox{and}\qquad}

\def\sssum{\mathop{\sum\ \sum\ \sum}}
\def\ssum{\mathop{\sum\, \sum}}
\def\ssumw{\mathop{\sum\qquad \sum}}

\def\vec#1{\mathbf{#1}}
\def\inv#1{\overline{#1}}
\def\num#1{\mathrm{num}(#1)}
\def\dist{\mathrm{dist}}

\def\fA{{\mathfrak A}}
\def\fB{{\mathfrak B}}
\def\fC{{\mathfrak C}}
\def\fU{{\mathfrak U}}
\def\fV{{\mathfrak V}}

\newcommand{\bflambda}{{\boldsymbol{\lambda}}}
\newcommand{\bfxi}{{\boldsymbol{\xi}}}
\newcommand{\bfrho}{{\boldsymbol{\rho}}}
\newcommand{\bfnu}{{\boldsymbol{\nu}}}

\def\GL{\mathrm{GL}}
\def\SL{\mathrm{SL}}

\def\Hba{\overline{\cH}_{a,m}}
\def\Hta{\widetilde{\cH}_{a,m}}
\def\Hb1{\overline{\cH}_{m}}
\def\Ht1{\widetilde{\cH}_{m}}

\def\flp#1{{\left\langle#1\right\rangle}_p}
\def\flm#1{{\left\langle#1\right\rangle}_m}
\def\dmod#1#2{\left\|#1\right\|_{#2}}
\def\dmodq#1{\left\|#1\right\|_q}

\def\Zm{\Z/m\Z}

\def\Err{{\mathbf{E}}}

\newcommand{\comm}[1]{\marginpar{%
\vskip-\baselineskip 
\raggedright\footnotesize
\itshape\hrule\smallskip#1\par\smallskip\hrule}}

\def\xxx{\vskip5pt\hrule\vskip5pt}


\title{Incomplete Gauss sums modulo primes}

 \author[B. Kerr] {Bryce Kerr}

\address{Department of Pure Mathematics, University of New South Wales,
Sydney, NSW 2052, Australia}
\email{bryce.kerr89@gmail.com}
\date{\today}
\pagenumbering{arabic}


\begin{abstract} We obtain a new bound for incomplete Gauss sums modulo primes. Our argument falls under the framework of Vinogradov's method which we use to reduce the problem under consideration to bounding the number of solutions to two distinct  systems of congruences. The first is related to Vinogradov's mean value theorem, although the second does not appear to have been considered before. Our bound  improves on current results in the range $N\ge q^{2k^{-1/2}+O(k^{-3/2})}$.
 \end{abstract}
\maketitle
\section{Introduction}
The estimation of  exponential sums  of the form
\begin{align*}
\sum_{M<n\le M+N}e^{2\pi i f(n)},
\end{align*}
where $f$ is a polynomial  of large degree, is a common problem in number theory with a wide range of arithmetic consequences. This problem has been considered by a number of previous authors, see~\cite{Bom,HB,Par,Wo1} for recent progress on the estimation of such sums. See also~\cite{Fo} and~\cite[Chapter~8]{IwKo} for a brief overview of current results and techniques.

Let $a$ and $q$ be integers with $(a,q)=1$. In this paper we consider the problem of estimating sums of the form
\begin{align}
\label{eq:sums}
\sum_{M<n\le M+N}e_q(an^{k}),
\end{align}
where $e_q(z)$ is defined by
$$e^{2\pi i z/q}.$$
A consideration of the sums~\eqref{eq:sums} is motivated by Warings problem, which asks for the smallest integer $m$ such that all sufficiently large integers are the sum of at most $m$ $k$-th powers. The sums~\eqref{eq:sums} arise when treating the minor arcs in an application of the Circle Method to which we refer the reader to~\cite{Va,Wo2}. In such applications, one uses a bound of the form
\begin{align}
\label{eq:bound12345678}
\left|\sum_{M<n\le M+N}e_q(an^{k})\right|\le N^{1-1/\rho},
\end{align}
where $\rho$ depends on the integer $k$ and the size of $N$ relative to $q.$ The order of $\rho$ as a function of $k$ is an important factor in applications and it is desirable for $\rho$ to grow as slowly as possible for large $k$. 

 For large $k$, current methods for producing the sharpest bound for~\eqref{eq:bound12345678}  reduce the problem to estimating the number of solutions to a certain system of equations known as Vinogradov's mean value theorem, which we describe below. For these values of $k,$ the best known estimate for~\eqref{eq:bound12345678} relies on recent results of Bourgain, Demeter and Guth~\cite{BDG} (see for example~\cite[Lemma 2.1]{KuWo} and~\cite[Equation 4.5]{Fo}) and may be stated as follows. For $k\ge 3$ and $N\le q \le N^{k-1}$  we have 
\begin{align}
\label{eq:weylbest}
\left|\sum_{M<n\le M+N}e_q(an^{k})\right|\le N^{1-1/k(k-1)+o(1)},
\end{align}
where $q$ is an arbitrary integer and $(a,q)=1$. More precisely, the bound~\eqref{eq:weylbest} is sharpest known when $N$ is small relative to $q$. For example, when $q$ is prime one may use the Weil bounds to show 
\begin{align}
\label{eq:weilbound}
\left|\sum_{M<n\le M+N}e_q(an^{k})\right|\ll q^{1/2}\log{q},
\end{align}
 which is better than~\eqref{eq:weylbest} in the range $N\ge q^{1/2+1/k^2+O(1/k^3)}.$

 For integers $k,r$ and $V$ we let $J_{r,k}(V)$ count the number of solutions to the system of equations
\begin{align}
\label{eq:VMVT11111}
v_1^{j}+\dots+v_r^j=v_{r+1}^{j}+\dots+v_{2r}^{j}, \quad j=1,\dots,k,
\end{align}
with variables satisfying
$$1\le v_1,\dots,v_{2r}\le V.$$
Bounds for $J_{r,k}(V)$ are usually referred to as Vinogradov's mean value theorem and typically take the shape
\begin{align}
\label{eq:VMVT}
J_{r,k}(V)\le (1+V^{r-k(k+1)/2})V^{r+o(1)}.
\end{align}
The main conjecture for $J_{r,k}(V)$ is the statement that~\eqref{eq:VMVT} holds for all integers $r,k$ and $V$. Significant progress concerning bounds for $J_{r,k}(V)$ has been made by Wooley~\cite{Wo11,Wo12,Wo23,Wo3} and in particular Wooley~\cite{Wo31} has proven the main conjecture for $J_{r,k}(V)$ in the case $k=3$. More recently, Bourgain, Demeter and Guth~\cite{BDG} have proven the main conjecture for $J_{r,k}(V)$ when $k>3$. Combining the restults of Wooley~\cite{Wo31} for the case $k=3$  with those of Bourgain, Demeter and Guth for the case $k>3$,  for any integers $r,k$ and $V$ we have
\begin{align}
\label{eq:VMVTmain}
J_{r,k}(V)\le (1+V^{r-k(k+1)/2})V^{r+o(1)}.
\end{align}

In this paper we obtain a new bound for the sums~\eqref{eq:sums} when $q$ is prime. Our argument falls under the framework of Vinogradov's method which we use to reduce the problem to bounding two systems of congruences. The first concerns the number of solutions to the system
\begin{align}
\label{eq:VMVTc}
v_1^{j}+\dots+v_r^j\equiv v_{r+1}^{j}+\dots+v_{2r}^{j} \mod{q}, \quad j=1,\dots,k,
\end{align}
which has been considered by Karatsuba~\cite{Kar} who attributes the problem to Korobov. We bound the number of solutions to this system by reducing to Vinogradov's mean value theorem and applying results of Wooley~\cite{Wo31} and Bourgain, Demeter and Guth~\cite{BDG}. The second system of congruences (see Lemma~\ref{lem:mv11}) does not appear to have been considered before and we use some ideas based on Mordell~\cite{Mo}. We also note that a related system of equations has been considered by Arkhipov and  Karatsuba~\cite{ArKa}. 
\subsection*{Acknowledgement} The author would like to thank Igor Shparlinski for bringing the paper of Karatsuba~\cite{Kar} to the authors attention, for explaining the Russian version of Karatsuba's paper and for his comments on a preliminary version of the current paper.
\section{Main Results}
Our main result is as follows.
\begin{theorem}
\label{thm:main1}
Let $k\ge 3$ be an integer, $q$ a prime number, $a$ an integer with $(a,q)=1$ and  suppose $M$ and $N$ are integers with $N$ satisfying
\begin{align}
\label{eq:Nassumption}
N\le q^{1/2+1/(k^{1/2}+1)}.
\end{align}
Then we have 
\begin{align*}
\left|\sum_{M< n \le M+N}e_q(an^{k})\right| \le \left(\frac{q^{1/(k-2)^{1/2}}}{N}\right)^{2/k(k+1)}Nq^{o(1)}.
\end{align*}
\end{theorem}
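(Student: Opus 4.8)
The plan is to run Vinogradov's method in the standard way: convert the incomplete sum over an interval of length $N$ into an average over shifted intervals, apply Hölder's inequality to raise the sum to a high power, and then detect the resulting Diophantine information using additive characters modulo $q$. Concretely, fix a parameter $r$ (to be chosen of size roughly $k^{3/2}$ or so, balanced against the main conjecture) and a smoothing length $H$ with $H\le N$. For each $h_1,\dots,h_k$ in a box of size $H$ one shifts the variable $n$ by a polynomial combination $h_1+h_2+\dots$ so that, after expanding $(n+\text{shift})^k$, the congruence data modulo $q$ is governed by the power sums $j=1,\dots,k$. Raising the sum to the $2r$-th power and expanding the exponential $e_q$ of a difference of two blocks of monomials, one arrives at a bound of the shape
\begin{align*}
\left|\sum_{M<n\le M+N}e_q(an^k)\right|^{2r}\ll \frac{N^{2r}}{H^{k}}\cdot\frac{1}{q}\sum_{\vec c}I_1(\vec c)\,I_2(\vec c),
\end{align*}
where $\vec c$ runs over the relevant coefficient vectors modulo $q$, $I_1$ counts solutions to~\eqref{eq:VMVTc} (the congruence form of Vinogradov's system) coming from the $n$-variables, and $I_2$ counts solutions to the second, new system coming from the shift variables $h$. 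Applying Cauchy--Schwarz separates the two counts, so it suffices to bound $\sum_{\vec c} I_1(\vec c)^2$ and $\sum_{\vec c} I_2(\vec c)^2$; the first diagonalizes to the number of solutions of~\eqref{eq:VMVTc} with $2r$ variables on each side, which one controls by lifting to the integer system~\eqref{eq:VMVT11111} over a range of size $q^{1/k}$ (or $N$, whichever is appropriate) plus a completion term, invoking~\eqref{eq:VMVTmain}, while the second reduces to Lemma~\ref{lem:mv11}.

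The key steps, in order, are: (1) the shift/smoothing step, isolating the cleanest polynomial in $n$ whose top $k$ coefficients are $h$-dependent and whose reductions modulo $q$ capture the power sums; (2) Hölder in the $n$-variable to introduce the $2r$-th power, and a second Hölder (or Cauchy--Schwarz) in $h$; (3) orthogonality of additive characters modulo $q$ to pass from the exponential sum to the weighted count $\frac1q\sum_{\vec c}I_1 I_2$; (4) Cauchy--Schwarz to decouple, reducing to $\sum_{\vec c}I_1(\vec c)^2$ and $\sum_{\vec c}I_2(\vec c)^2$; (5) bounding $\sum_{\vec c}I_1^2$ by Vinogradov's mean value theorem~\eqref{eq:VMVTmain} after handling the completion to the full residue system, and bounding $\sum_{\vec c}I_2^2$ by Lemma~\ref{lem:mv11}; (6) collecting exponents and optimizing over $r$ (and over $H$), where the constraint~\eqref{eq:Nassumption} on $N$ is exactly what makes the $q^{1/(k-2)^{1/2}}/N$ factor a genuine saving and ensures the error/completion terms do not dominate.

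The main obstacle I expect is Step (5) for the \emph{second} system: controlling $\sum_{\vec c}I_2(\vec c)^2$ via Lemma~\ref{lem:mv11}. This is the genuinely new input — it is not Vinogradov's mean value theorem, and the abstract says it rests on ideas of Mordell~\cite{Mo}, which typically means a clever change of variables turning a system of monomial congruences into something like a Vandermonde or a count of points on a variety over $\F_q$ that can be handled by elementary or Weil-type arguments. Getting the exponent in that lemma sharp enough — so that after decoupling and re-optimizing one lands precisely on the exponent $2/k(k+1)$ with the saving $q^{1/(k-2)^{1/2}}/N$ — is the delicate part; a lossy bound there would weaken the range in~\eqref{eq:Nassumption} or the quality of the final estimate. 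A secondary technical nuisance is bookkeeping the completion step when lifting~\eqref{eq:VMVTc} to~\eqref{eq:VMVT11111}: one must choose the lifting range and the parameter $r$ so that the extra $q$-dependent factor from counting residues, combined with the $N^{-1}$ from the smoothing normalization, produces the stated power of $q^{1/(k-2)^{1/2}}/N$ rather than something weaker.
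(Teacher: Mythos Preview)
Your high-level plan --- Vinogradov's method, reducing to the congruence VMVT system~\eqref{eq:VMVTc} on one side and to Lemma~\ref{lem:mv11} on the other, then decoupling by H\"older --- is indeed the skeleton of the paper's argument. But several of the concrete choices you sketch are off, and one structural ingredient is missing.

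\textbf{The shift is bilinear.} The paper does \emph{not} shift by a sum $h_1+h_2+\cdots$; it shifts by a product $uv$ with $1\le u\le U$ and $1\le v\le V$, where $U$ and $V$ are chosen asymmetrically (see~\eqref{eq:UVdef}). Expanding $(n+uv)^k$ gives a polynomial in $v$ whose coefficients are $\binom{k}{j}n^{j}u^{k-j}$; after a first H\"older in $v$ to exponent $\ell=\lfloor k/2\rfloor$, the coefficients become $\sum_{i\le \ell} n_i^{j}u_i^{k-j}$, and it is precisely this mixed-monomial shape that feeds into the new system of Lemma~\ref{lem:mv11}. A single additive shift in several $h$-variables will not produce that system; the bilinearity of $uv$ is the point.

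\textbf{Parameters.} You write $r\approx k^{3/2}$ and a lifting range $q^{1/k}$. The paper takes $r=k(k+1)/2$ (the critical exponent in~\eqref{eq:VMVTmain}) and applies the congruence VMVT of Lemma~\ref{lem:VMVTq} with $V\approx q^{1/m}$ where $m=\lceil\sqrt{2\ell}\,\rceil\approx\sqrt{k}$; it is this scale $q^{1/\sqrt{k}}$, not $q^{1/k}$, that produces the factor $q^{1/(k-2)^{1/2}}$ in the theorem. With your parameters the exponents would not close.

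\textbf{Decoupling shape.} Rather than Cauchy--Schwarz giving $\sum I_1^2$ and $\sum I_2^2$, the paper uses a double H\"older yielding the three-factor bound~\eqref{eq:W0b}: one factor $\bigl(\sum_\lambda I(\lambda)\bigr)^{2r-2}=(NU)^{\ell(2r-2)}$ is trivial, one factor $\sum_\lambda I(\lambda)^2$ is controlled by Lemma~\ref{lem:mv11}, and the remaining factor $\sum_\lambda|\sum_v\cdots|^{2r}$ is $q^k J_{r,k}(V;q)$, handled by Lemma~\ref{lem:VMVTq}. So the roles are slightly swapped relative to your description: the ``new'' system comes from the $(n,u)$-variables, and the VMVT comes from the single $v$-variable.

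\textbf{Boundary terms.} The paper runs an induction on $N$: the shift $n\mapsto n+uv$ moves the endpoints by at most $UV\le N/4$, and the resulting tail sums are absorbed by the induction hypothesis (see~\eqref{eq:sW}). You will need either this induction or a completion argument; without it the shifting step loses a factor comparable to the claimed saving.
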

We first note that the bound of Theorem~\ref{thm:main1} is nontrivial in the range
$$q^{(1+\varepsilon)/(k-2)^{1/2}}\le N,$$
and in this case we have a bound of the form
\begin{align*}
\left|\sum_{M< n \le M+N}e_q(an^{k})\right|\le N^{1-\rho+o(1)},
\end{align*}
where 
$$\rho=\frac{2\varepsilon}{(1+\varepsilon)k(k+1)}.$$
Comparing Theorem~\ref{thm:main1} with the bound~\eqref{eq:weylbest}, we see that Theorem~\ref{thm:main1} provides an improvement over~\eqref{eq:weylbest} in the range
$$N\ge q^{2(k-1)/(k-3)(k-2)^{1/2}}=q^{2k^{-1/2}+O(k^{-3/2})}.$$
\section{Preliminary results}
We first prove the following general inequality for systems of congruences.
\begin{lemma}
\label{lem:systemF}
Suppose $q$ is prime and $t,m$ and $\ell$ are integers. Let $$X\subseteq (\Z/q\Z)^{t},$$ be a set  and  $f=\{f_i\}_{i=1}^{m}$ a sequence of functions on $X$,
$$f_i : X\rightarrow \Z/q\Z, \quad i=1,\dots,m.$$
Let $\sigma=\{\sigma_i\}_{i=1}^{2\ell},$ be a sequence of numbers with each $\sigma_i \not \equiv 0 \mod{q}$ and $\lambda=\{\lambda_j\}_{j=1}^{m}$ a sequence of numbers with each $\lambda_j \in \Z/q\Z$. We let $I_{m,\ell}(f,X,\sigma,\lambda)$ denote the number of solutions to the system of equations
\begin{align}
\label{eq:systemF}
\sigma_1 f_j(x_1)+\dots+\sigma_{2\ell}f_j(x_{2\ell})\equiv \lambda_j \mod{q}, \quad j=1,\dots,m,
\end{align}
with variables $x_1,\dots,x_{2\ell}$ satisfying 
$$x_1,\dots,x_{2\ell}\in X.$$
 In the special case that $\sigma=\{(-1)^{i}\}_{i=1}^{2\ell}$ and each $\lambda_j=0$ we write
$$I_{m,\ell}(f,X,\{(-1)^{i}\}_{i=1}^{2\ell},0)=I_{m,\ell}(f,X).$$
For any $X,f,\sigma$ and $\lambda$ as above, we have
$$I_{m,\ell}(f,X,\sigma,\lambda)\le I_{m,\ell}(f,X).$$
\end{lemma}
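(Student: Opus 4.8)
The plan is to prove this via the standard Fourier-analytic (circle method over $\Z/q\Z$) argument that bounds a weighted solution count by the ``diagonal'' count with characters. First I would express $I_{m,\ell}(f,X,\sigma,\lambda)$ using additive characters modulo $q$: writing $e_q(z)=e^{2\pi i z/q}$, we have the orthogonality relation
\begin{align*}
I_{m,\ell}(f,X,\sigma,\lambda)=\frac{1}{q^{m}}\sum_{h_1,\dots,h_m\in\Z/q\Z}\ e_q\!\left(-\sum_{j=1}^{m}h_j\lambda_j\right)\prod_{i=1}^{2\ell}\sum_{x\in X}e_q\!\left(\sigma_i\sum_{j=1}^{m}h_j f_j(x)\right).
\end{align*}
For each fixed $\vec h=(h_1,\dots,h_m)$ set $S_i(\vec h)=\sum_{x\in X}e_q(\sigma_i\sum_j h_j f_j(x))$. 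Since $|e_q(-\sum h_j\lambda_j)|=1$, the triangle inequality gives
\begin{align*}
I_{m,\ell}(f,X,\sigma,\lambda)\le\frac{1}{q^{m}}\sum_{\vec h}\ \prod_{i=1}^{2\ell}\left|S_i(\vec h)\right|.
\end{align*}

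Next I would remove the dependence of each $S_i$ on the nonzero scalar $\sigma_i$. The key observation is that for $\sigma\not\equiv 0\bmod q$ the map $t\mapsto\sigma t$ is a bijection of $\Z/q\Z$, so the substitution $\vec h\mapsto$ (suitable rescaling) shows that the multiset of values $\{|S_i(\vec h)|:\vec h\}$ does not actually depend on $\sigma_i$; more precisely, $|S_i(\vec h)|=\big|\sum_{x\in X}e_q(\sum_j h_j f_j(x))\big|$ after the change of variable $h_j\mapsto\sigma_i h_j$, which is legitimate because we then apply the AM--GM / Hölder inequality
\begin{align*}
\prod_{i=1}^{2\ell}|S_i(\vec h)|\le\frac{1}{2\ell}\sum_{i=1}^{2\ell}|S_i(\vec h)|^{2\ell},
\end{align*}
and summing over $\vec h$ each term $\sum_{\vec h}|S_i(\vec h)|^{2\ell}$ becomes, after rescaling $\vec h$, exactly $\sum_{\vec h}|S(\vec h)|^{2\ell}$ with $S(\vec h)=\sum_{x\in X}e_q(\sum_j h_j f_j(x))$ the unscaled sum. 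Hence
\begin{align*}
I_{m,\ell}(f,X,\sigma,\lambda)\le\frac{1}{q^{m}}\sum_{\vec h}|S(\vec h)|^{2\ell}.
\end{align*}

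Finally I would recognize the right-hand side as $I_{m,\ell}(f,X)$: expanding $|S(\vec h)|^{2\ell}=S(\vec h)^{\ell}\overline{S(\vec h)}^{\ell}$ as a sum over $x_1,\dots,x_{2\ell}\in X$ of $e_q\big(\sum_j h_j(f_j(x_1)+\dots+f_j(x_\ell)-f_j(x_{\ell+1})-\dots-f_j(x_{2\ell}))\big)$ and summing over $\vec h$ using orthogonality once more collapses the sum to $q^m$ times the number of $(x_1,\dots,x_{2\ell})\in X^{2\ell}$ satisfying $\sum_{i=1}^{2\ell}(-1)^{i+1}f_j(x_i)\equiv 0\bmod q$ for all $j$, which is precisely $I_{m,\ell}(f,X)$ (the sign pattern $(-1)^{i+1}$ versus $(-1)^i$ being immaterial by relabelling). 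This yields the claimed inequality. The only point requiring a little care is the rescaling step: one must check that applying Hölder \emph{first} and then performing the change of variables $\vec h\mapsto\sigma_i\vec h$ inside each individual term $\sum_{\vec h}|S_i(\vec h)|^{2\ell}$ is valid, which it is because that substitution is a bijection on $(\Z/q\Z)^m$ for each fixed nonzero $\sigma_i$; this is also exactly where primality of $q$ (ensuring every $\sigma_i\not\equiv0$ is invertible) is used. I expect this bijection/rescaling bookkeeping to be the main --- though still routine --- obstacle, the rest being standard orthogonality.
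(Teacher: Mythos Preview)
Your proposal is correct and follows essentially the same route as the paper: express the count via additive characters, drop the $\lambda$-dependence by the triangle inequality, separate the factors by an inequality (you use AM--GM, the paper uses H\"older with equal exponents --- these give the same bound here), and then use the bijection $\vec h\mapsto\sigma_i\vec h$ on $(\Z/q\Z)^m$ to remove the $\sigma_i$'s before reinterpreting $\tfrac{1}{q^m}\sum_{\vec h}|S(\vec h)|^{2\ell}$ as $I_{m,\ell}(f,X)$. The only cosmetic difference is that the paper writes the H\"older step as a product of $2\ell$ equal factors and rescales inside each factor, whereas you average via AM--GM; the substance is identical.
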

\begin{proof}
Expanding the system~\eqref{eq:systemF} into additive characters, we see that 
\begin{align*}
& I_{m,\ell}(f,X,\sigma,\lambda) \\ & \ \ \ \ \ =\frac{1}{q^{m}}\sum_{\substack{1\le a_h \le q \\ 1\le h\le m}}\prod_{\substack{1\le i \le 2\ell}}\left(\sum_{\substack{x_i \in X}}e_q\left(\sigma_i\left(\sum_{j=1}^{m}a_j f_j(x_i)\right)\right) \right)\left(-\sum_{j=0}^{m}a_j\lambda_j \right),
\end{align*}
so that writing
$$f(a_1,\dots,a_{m})=\sum_{\substack{x\in X}}e_q\left(\sum_{j=1}^{m}a_j f_j(x)\right),$$
the above implies 
\begin{align*}
&  I_{m,\ell}(f,X,\sigma,\lambda)=\frac{1}{q^{m}}\sum_{\substack{1\le a_h \le q \\ 1\le h\le m}}\prod_{\substack{1\le i \le 2\ell}}f(\sigma_ia_1,\dots,\sigma_i a_{m})e_q\left(-\sum_{j=0}^{m}a_j\lambda_j \right),
\end{align*}
which we may bound by
\begin{align*}
& I_{m,\ell}(f,X,\sigma,\lambda) \le \frac{1}{q^{m}}\sum_{\substack{1\le a_h \le q \\ 1\le h\le m}}\prod_{\substack{1\le i \le 2\ell }}\left|f(\sigma_ia_1,\dots,\sigma_i a_{m})\right|.
\end{align*}
An application of H\"{o}lder's inequality gives
\begin{align*}
 I_{m,\ell}(f,X,\sigma,\lambda)\le \prod_{i=1}^{2\ell}\left(\frac{1}{q^{m}}\sum_{\substack{1\le a_h \le q \\ 1\le h\le m}}\left|f(\sigma_ia_1,\dots,\sigma_i a_{m})\right|^{2\ell} \right)^{1/2\ell},
\end{align*}
and hence
\begin{align*}
 I_{m,\ell}(f,X,\sigma,\lambda) &\le \prod_{i=1}^{2\ell}\left(\frac{1}{q^{m}}\sum_{\substack{1\le a_h \le q \\ 1\le h\le m}}\left|f(a_1,\dots,a_{m})\right|^{2\ell} \right)^{1/2\ell} \\
&=\frac{1}{q^{m}}\sum_{\substack{1\le a_h \le q \\ 1\le h\le m}}\left|f(a_1,\dots,a_{m})\right|^{2\ell}.
\end{align*}
The result follows since the term 
$$\frac{1}{q^{m}}\sum_{\substack{1\le a_h \le q \\ 1\le h\le m}}\left|f(a_1,\dots,a_{m})\right|^{2\ell},$$
counts the number of solutions to the system of congruences 
\begin{align*}
\sum_{i=1}^{2\ell}(-1)^{i}f_j(x_i)\equiv 0, \quad j=1,\dots,m,
\end{align*}
with variables $x_1,\dots,x_{2\ell}$ satisfying
$$x_1,\dots,x_{2\ell}\in X.$$
\end{proof}
The proof of the following uses the bound~\eqref{eq:VMVTmain}.
\begin{lemma}
\label{lem:VMVTq}
For integers $k,r,V$ and  $q$ we let $J_{r,k}(V;q)$ count the number of solutions to the system of congruences
\begin{align}
\label{eq:VMVTq}
v^j_1+\dots+v^j_{r}\equiv v_{r+1}^{j}+\dots+v_{2r}^{j} \mod{q}, \quad j=1,\dots,k,
\end{align}
with variables satisfying
$$1\le v_1,\dots,v_{2r} \le V.$$
Let $1\le m < k$ be an integer and suppose $V$ satisfies
\begin{align}
\label{eq:Vcond123}
q^{1/m}\ll V<\frac{q^{1/m}}{r}.
\end{align}
If $r\ge k(k+1)/2$ we have
$$J_{r,k}(V;q)\le V^{2r-km+m(m-1)/2+o(1)}.$$
\end{lemma}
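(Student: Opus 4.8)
The plan is to reduce the congruence count $J_{r,k}(V;q)$ to a genuine (integral) Vinogradov mean value $J_{r,k}(V)$ by a lifting argument, exploiting the hypothesis~\eqref{eq:Vcond123} that $V$ is just below $q^{1/m}$. First I would open up the system~\eqref{eq:VMVTq} with additive characters modulo $q$, writing
\begin{align*}
J_{r,k}(V;q)=\frac{1}{q^{k}}\sum_{\substack{1\le a_j\le q\\ 1\le j\le k}}\left|\sum_{1\le v\le V}e_q\!\left(a_1 v+\dots+a_k v^{k}\right)\right|^{2r}.
\end{align*}
The idea is to split the sum over $(a_1,\dots,a_k)$ according to whether the ``top'' frequencies $a_{m+1},\dots,a_k$ vanish. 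Because $V<q^{1/m}/r$, the elementary power sums $v_1^{j}+\dots+v_r^{j}$ and $v_{r+1}^{j}+\dots+v_{2r}^{j}$ for $j\le m$ lie in an interval of length $\ll rV^{j}\le r^{j}V^{j} < q$, so a congruence modulo $q$ in degrees $1,\dots,m$ forces the corresponding integer equations; this is exactly the Mordell-type observation that small solutions to a congruence are solutions to the equation. Thus the contribution of the terms with $a_{m+1}=\dots=a_k=0$ is bounded by $q^{-m}$ times (the full character sum over $a_1,\dots,a_m$), which is precisely $J_{r,m}(V)$ — the integral mean value in degrees up to $m$.

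Next I would handle the remaining terms, where at least one of $a_{m+1},\dots,a_k$ is nonzero modulo $q$. Here one expects strong cancellation: for fixed nonzero top coefficients, the inner Weyl sum $\sum_{v\le V}e_q(a_1v+\dots+a_kv^{k})$ should be small on average over the lower coefficients, because the polynomial genuinely has degree $>m$. The cleanest route is to apply Lemma~\ref{lem:systemF}: the count of solutions with prescribed nonzero $\sigma_i$ (or, dually, the average over twisted frequencies) is dominated by the untwisted count, which one again identifies with a mean value, and then one invokes the main conjecture~\eqref{eq:VMVTmain} for $J_{r,k}(V)$ to bound it by $V^{2r-k(k+1)/2+o(1)}$ when $r\ge k(k+1)/2$. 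Combining, one gets
\begin{align*}
J_{r,k}(V;q)\le J_{r,m}(V)+ (\text{small term})\le V^{2r-m(m+1)/2+o(1)},
\end{align*}
using~\eqref{eq:VMVTmain} with $k$ replaced by $m$ and the hypothesis $r\ge k(k+1)/2\ge m(m+1)/2$. Note $2r-km+m(m-1)/2 = 2r - m(m+1)/2 - m(k-m)$, so in fact the target exponent is \emph{smaller} than $2r-m(m+1)/2$; this tells me the lifting must be done more carefully, keeping the degrees $m+1,\dots,k$ in play rather than discarding them.

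So the refined plan, and the main obstacle, is this: after using $V<q^{1/m}/r$ to convert the first $m$ congruences into equations, one is left counting $v_1,\dots,v_{2r}\in[1,V]$ satisfying $v_1^{j}+\dots=v_{r+1}^{j}+\dots$ exactly for $j\le m$ and modulo $q$ for $m<j\le k$. Fixing the values of the lower power sums (there are $\ll V^{m}$ choices, or more precisely the number of such choices times the count is $J_{r,m}(V)\asymp V^{2r-m(m+1)/2+o(1)}$ by the main conjecture in degree $m$), one must then show that the extra congruences in degrees $m+1,\dots,k$ cut down each fibre by a factor $q^{-(k-m)}\asymp V^{-m(k-m)}$ on average. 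This is where Lemma~\ref{lem:systemF} does the real work: it lets one replace the $(k-m)$ inhomogeneous congruences by the homogeneous difference system and bound uniformly, and a completion/averaging argument over the fibres yields the saving $V^{-m(k-m)+o(1)}$. The hard part will be making this fibre-by-fibre saving rigorous — ensuring the congruences in the top degrees are ``independent enough'' of the fixed lower power sums that the expected $q^{-(k-m)}$ density actually holds in the relevant range of $V$, which is exactly why the two-sided condition $q^{1/m}\ll V<q^{1/m}/r$ (rather than merely $V<q^{1/m}$) is imposed.
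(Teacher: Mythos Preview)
Your lifting idea is the right starting point, but the mechanism you propose for extracting the saving in degrees $m+1,\dots,k$ is not the one that works, and Lemma~\ref{lem:systemF} plays no role here. The paper's argument is considerably more direct than your refined plan. Once $V<q^{1/m}/r$ forces the congruences in degrees $j\le m$ to be genuine equalities, the differences $\sum_{i\le r}v_i^{j}-\sum_{i>r}v_i^{j}$ for $m<j\le k$ lie in $[-rV^{j},rV^{j}]$, so each such congruence says this difference equals $\lambda_j q$ for some integer $|\lambda_j|\le rV^{j}/q$. Hence
\[
J_{r,k}(V;q)=\sum_{|\lambda_j|\le rV^{j}/q}J_{r,k}(V;\lambda_{m+1}q,\dots,\lambda_k q),
\]
where $J_{r,k}(V;\lambda_{m+1},\dots,\lambda_k)$ counts solutions to the full degree-$k$ system of \emph{equations} with the top $k-m$ equations shifted by $\lambda_j$. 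The crucial, and essentially trivial, step is that each such inhomogeneous count is bounded by the homogeneous one, $J_{r,k}(V;\lambda)\le J_{r,k}(V)$, via the integral representation over $[0,1]^{k}$ and the triangle inequality. Summing over the $\ll\prod_{j=m+1}^{k}V^{j}/q$ choices of $\lambda$ and applying~\eqref{eq:VMVTmain} in degree $k$ (not degree $m$) gives
\[
J_{r,k}(V;q)\ll\Bigl(\prod_{j=m+1}^{k}\frac{V^{j}}{q}\Bigr)V^{2r-k(k+1)/2+o(1)},
\]
and the lower bound $V\gg q^{1/m}$ turns the factor $q^{-(k-m)}$ into $V^{-m(k-m)+o(1)}$, yielding the stated exponent.

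Your plan instead fixes the lower power sums, invokes $J_{r,m}(V)$, and then asserts that the $k-m$ remaining congruences ``cut down each fibre by $q^{-(k-m)}$ on average'', appealing to Lemma~\ref{lem:systemF}. That lemma only says an inhomogeneous count is at most the homogeneous one; it gives no density saving whatsoever, so the fibre argument has no engine. The point you are missing is that one should keep all $k$ degrees in the mean value and pay for the enumeration of lifts, rather than drop to degree $m$ and try to recover the loss by an unproven equidistribution claim.
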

\begin{proof}
For integers $\lambda_{m+1},\dots,\lambda_r$ we let 
$J_{r,k}(V,\lambda_{m+1},\dots,\lambda_{k})$ denote the number of solutions to the system of equations
\begin{align*}
v^j_1+\dots+v^j_{r}= v_{r+1}^{j}+\dots+v_{2r}^{j}, \quad j=1,\dots,m,
\end{align*}
\begin{align*}
v^j_1+\dots+v^j_{r}= v_{r+1}^{j}+\dots+v_{2r}^{j}+\lambda_j, \quad j=m+1,\dots,k,
\end{align*}
with variables $v_1,\dots,v_{2r}$ satisfying
$$1\le v_1,\dots,v_{2r} \le V.$$
Expressing $J_{r,k}(V,\lambda_{m+1},\dots,\lambda_{k})$ via additive characters we see that
\begin{align*}
&J_{r,k}(V,\lambda_{m+1},\dots,\lambda_{k}) \\ & \quad =\int_{0}^{1}\dots \int_{0}^{1}\left|\sum_{1\le v \le V}e^{2\pi i(\alpha_1 v+\dots+\alpha_k v^{k})} \right|^{2r}e^{-2\pi i(\alpha_{m+1}\lambda_{m+1}+\dots+\alpha_k \lambda_{k})}d\alpha_1\dots d\alpha_k,
\end{align*} 
and hence
\begin{align*}
J_{r,k}(V,\lambda_{m+1},\dots,\lambda_{k})\le \int_{0}^{1}\dots \int_{0}^{1}\left|\sum_{1\le v \le V}e^{2\pi i(\alpha_1 v+\dots+\alpha_k v^{k})} \right|^{2r}d\alpha_1\dots d\alpha_k,
\end{align*}
which implies
\begin{align}
\label{eq:JJJJJ}
J_{r,k}(V,\lambda_{m+1},\dots,\lambda_{k})\le J_{r,k}(V).
\end{align}
Using the assumption~\eqref{eq:Vcond123}, we have
\begin{align*}
J_{r,k}(V;q)=\sum_{\substack{|\lambda_{j}|\le rV^{j}/q \\ m+1\le j \le k}}J_{r,k}(V,\lambda_{m+1}q,\dots,\lambda_{k}q),
\end{align*}
and hence by~\eqref{eq:JJJJJ}
\begin{align*}
J_{r,k}(V;q)\ll \left(\prod_{j=m+1}^{k}\frac{V^j}{q}\right)J_{r,k}(V).
\end{align*}
Since $r\ge k(k+1)/2,$  an application of~\eqref{eq:VMVTmain} gives
\begin{align*}
J_{r,k}(V;q)\ll \left(\prod_{j=m+1}^{k}\frac{V^j}{q}\right)V^{2r-k(k+1)/2+o(1)},
\end{align*}
which on recalling~\eqref{eq:Vcond123} simplifies to
\begin{align*}
J_{r,k}(V;q)\ll V^{2r-km+m(m-1)/2+o(1)}.
\end{align*}
\end{proof}
For a proof of the following see~\cite{FI}. See also~\cite{ACZ,GG,Ke} for realted and more precise results.
\begin{lemma}
\label{lem:multcong}
Let $q$ be prime and suppose $M,N$ and $U$ are integers with $N$ and $U$ satisfying
$$NU\le q , \quad U\le N.$$
The number of solutions to the congruence
$$n_1u_1\equiv n_2u_2 \mod{q},$$
with variables satisfying
$$M< n_1,n_2\le M+N, \quad 1\le u_1,u_2 \le U,$$
is bounded by $$O(NU\log{q}).$$
\end{lemma}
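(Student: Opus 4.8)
The plan is to prove the counting estimate for $n_1u_1\equiv n_2u_2\pmod q$ by separating the cases where the products collapse modulo $q$ and where they do not. First I would isolate the "diagonal" contribution: if $n_1u_1\equiv n_2u_2\pmod q$ and also $n_1u_1=n_2u_2$ as integers, then this is a multiplicative energy count over an interval and a short range, which one controls by the divisor bound $\tau(n)=q^{o(1)}$ together with the fact that $n_1u_1\le (M+N)U$ can be as large as we like. Actually the cleaner route is: since $NU\le q$ and $n_1u_1, n_2u_2$ each lie in an interval of length at most $(M+N)U$, the congruence modulo $q$ forces $n_1u_1-n_2u_2$ to be a fixed multiple of $q$; more precisely, writing $n_i=M+m_i$ with $1\le m_i\le N$, the difference $n_1u_1-n_2u_2$ ranges over an interval whose length relative to $q$ we can estimate.

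The key step I would carry out is the following. Fix the value $h=n_1u_1-n_2u_2$, which is an integer divisible by $q$; because $u_1,u_2\le U\le N$ and $M<n_i\le M+N$, one checks that $|h|$ is bounded by roughly $NU+MU\cdot\tfrac{?}{}$ — this is where a small amount of care is needed, and the assumption $U\le N$ together with $NU\le q$ is designed precisely so that, after an affine change of variables or by splitting $n_i$ according to residues, the number of admissible multiples $h=\ell q$ is $O(1)$ or $O(U)$. Once $h$ is fixed, I would count solutions of $n_1u_1-n_2u_2=h$ with the stated ranges: for each choice of $n_2$ and $u_2$ (there are $O(NU)$ of them) the quantity $n_2u_2+h$ is determined, and the number of ways to write it as $n_1u_1$ with $n_1$ in an interval and $u_1\le U$ is at most the number of divisors of $n_2u_2+h$ in a suitable range, which is $q^{o(1)}$, in fact $O(\log q)$ suffices if one is slightly more clever, e.g. by noting $n_1$ ranges over an interval of length $N$ so $u_1=(n_2u_2+h)/n_1$ is determined by $n_1$ and one bounds the number of valid $n_1$ by the divisor count. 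Summing over the $O(1)$ (or $O(U)$) values of $\ell$ and the $O(NU)$ choices of $(n_2,u_2)$, and inserting $\tau(\cdot)=q^{o(1)}$, gives the claimed $O(NU\log q)$; alternatively the cited reference~\cite{FI} carries out exactly this computation.

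The main obstacle I anticipate is bookkeeping the range of the multiple $\ell$ with $h=\ell q$: one must verify that the conditions $NU\le q$ and $U\le N$ genuinely limit $\ell$ to $O(1)$ possibilities (after possibly splitting the $n$-interval into $O(1)$ subintervals), since otherwise an extra factor would creep in and the bound would degrade to $O((NU)^{1+o(1)})$ rather than $O(NU\log q)$. Concretely, $h=n_1u_1-n_2u_2$ with $M<n_i\le M+N$ gives $|h|\le (M+N)U - (M+1)\cdot 1 \le \ldots$, which is not obviously $O(q)$ when $M$ is large; the resolution is that for fixed $u_1,u_2$ the difference $n_1u_1-n_2u_2$ as $n_1,n_2$ vary over their interval of length $N$ spans a set of size $O(NU)\le O(q)$, so modulo $q$ there are $O(1)$ residue classes hit with multiplicity, and within each the count is handled as above. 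I would present this as: reduce to the integer equation $n_1u_1-n_2u_2=\ell q$ for $O(1)$ values of $\ell$, then bound each by $O(NU\log q)$ via the divisor bound, citing~\cite{FI} for the detailed verification.
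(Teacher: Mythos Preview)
The paper does not prove this lemma at all; it simply cites Friedlander--Iwaniec~\cite{FI} and moves on. So there is no argument in the paper to compare yours against, and your proposal is in fact attempting more than the paper does.

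Your reduction is essentially correct up to the point where you fix $(u_1,u_2)$ and observe that $n_1u_1-n_2u_2$ then ranges over an interval of length $(N-1)(u_1+u_2)<2NU\le 2q$, so the multiple $\ell$ in $n_1u_1-n_2u_2=\ell q$ takes $O(1)$ values. That part is fine. The gap is in the next step. You propose to fix $(n_2,u_2)$ and bound the number of representations of $n_2u_2+\ell q$ as $n_1u_1$ by a divisor count. Two problems: first, the divisor function gives $\tau(\cdot)=(\cdot)^{o(1)}$, not $O(\log q)$, so even in the best case you obtain $NU\,q^{o(1)}$ rather than $NU\log q$; second, the lemma places no hypothesis on $M$, so $n_2u_2+\ell q$ may have size $MU$ with $M$ arbitrarily large, and then $(\cdot)^{o(1)}$ is not even $q^{o(1)}$. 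Your claim that ``$O(\log q)$ suffices if one is slightly more clever'' is exactly where the argument is missing.

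The clean fix is to stay with $(u_1,u_2)$ fixed. For each of the $O(1)$ admissible $\ell$, the equation $n_1u_1-n_2u_2=\ell q$ is linear in $(n_1,n_2)$; a lattice-point count shows it has at most $N\gcd(u_1,u_2)/\max(u_1,u_2)+1$ solutions in the box $(M,M+N]^2$. Summing over $u_1,u_2\le U$ gives
\[
\sum_{u_1,u_2\le U}\Bigl(\frac{N\gcd(u_1,u_2)}{\max(u_1,u_2)}+1\Bigr)\ll NU\log U+U^2\ll NU\log q,
\]
using $U\le N$ for the last term. This recovers the stated bound with no dependence on $M$ and with the genuine $\log q$ rather than $q^{o(1)}$.
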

\begin{lemma}
\label{lem:mv11}
Let $q$ be prime and $M,N$ and $U$  integers with $N$ and $U$ satisfying
$$NU\le q, \quad U\le N.$$
Let $\ell \ge 1$ be an integer and suppose $k$ is an integer satisfying
$$2\ell \le k.$$
Let 
$I_{k,\ell}(N,U)$ denote the number of solutions to the system of congruences
\begin{align}
\label{eq:system1}
n_1^{j}u_1^{k-j}+\dots+n_{\ell}^ju_{\ell}^{k-j}\equiv n_{\ell+1}^{j}u_{\ell+1}^{k-j}+\dots+n_{2\ell}^ju_{2\ell}^{k-j} \mod q, \quad j=0,\dots,2\ell-1,
\end{align}
in variables $n_1,\dots,n_{2\ell},u_1,\dots,u_{2\ell}$ satisfying
\begin{align}
\label{eq:conditions1}
M< n_1,\dots,n_{2\ell} \le M+N, \quad 1\le u_1,\dots,u_{2\ell}\le U.
\end{align}
Then we have
\begin{align*}
I_{k,\ell}(N,U)\ll (NU)^{\ell}(\log{q})^{\ell-1}.
\end{align*}
\end{lemma}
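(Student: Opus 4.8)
The plan is to reduce the system~\eqref{eq:system1} to a combination of Vinogradov-type structure and the multiplicative congruence in Lemma~\ref{lem:multcong}, following ideas of Mordell~\cite{Mo}. The key observation is that the system~\eqref{eq:system1} consists of $2\ell$ congruences (for $j=0,1,\dots,2\ell-1$) in $2\ell$ pairs of variables $(n_i,u_i)$, so it is ``square'': the relevant Vandermonde-type determinant will force the solutions to be essentially diagonal. Concretely, I would first dispose of the cases where some $u_i\equiv 0$ or where various $n_iu_i$ coincide trivially, and then argue that for a generic choice the $2\ell$ points $(n_i:u_i)\in\mathbb{P}^1(\Z/q\Z)$ must, counted with multiplicity, split into the two halves $\{(n_i,u_i):1\le i\le\ell\}$ and $\{(n_i,u_i):\ell+1\le i\le 2\ell\}$ in a matched way.

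The main step is the following. Fix the values $(n_i,u_i)$ for $i=2,\dots,2\ell$ and regard the congruences as linear conditions. Writing $x_i = n_i/u_i \bmod q$ (legitimate once $u_i\not\equiv 0$), the $j$-th congruence, after dividing the $i$-th term by $u_i^k$... — more precisely, multiplying through appropriately — has the shape $\sum_{i=1}^{\ell} u_i^k x_i^j \equiv \sum_{i=\ell+1}^{2\ell} u_i^k x_i^j$ for $j=0,\dots,2\ell-1$. This says that the signed measure $\sum_{i=1}^{\ell}u_i^k\delta_{x_i}-\sum_{i=\ell+1}^{2\ell}u_i^k\delta_{x_i}$ on $\Z/q\Z$ has vanishing power moments of orders $0$ through $2\ell-1$. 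Since this measure is supported on at most $2\ell$ points, a Vandermonde argument (valid because $q$ is prime and the $x_i$ that are distinct give an invertible Vandermonde block) forces the total mass at each point of the support to vanish: for every value $v$, $\sum_{i\le\ell,\,x_i=v}u_i^k \equiv \sum_{i>\ell,\,x_i=v}u_i^k \pmod q$. In particular the sets of distinct values among $\{x_1,\dots,x_\ell\}$ and $\{x_{\ell+1},\dots,x_{2\ell}\}$ coincide.

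From here I would count. Having established that the multiset of ratios $\{n_i/u_i\}_{i\le\ell}$ and $\{n_i/u_i\}_{i>\ell}$ determine matching supports, I pair up indices: for each $i\le\ell$ there is some $i'>\ell$ with $n_iu_{i'}\equiv n_{i'}u_i\pmod q$. Because $NU\le q$ and $U\le N$, Lemma~\ref{lem:multcong} bounds the number of such matched pairs $(n_i,u_i),(n_{i'},u_{i'})$ by $O(NU\log q)$. Summing over the $O(1)$ ways to pair the two halves and applying this bound $\ell$ times (once per pair) yields $I_{k,\ell}(N,U)\ll (NU)^{\ell}(\log q)^{\ell-1}$ — the exponent $\ell-1$ rather than $\ell$ on the logarithm arising because the last pair, being forced by the previous $\ell-1$ choices together with the constraint that all supports match, contributes only $O(NU)$ rather than $O(NU\log q)$, or alternatively because one pairing is determined and the total count of $2\ell$-tuples with fixed matched support is controlled more tightly. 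I would need to be a little careful to handle the degenerate configurations (repeated ratios within a single half, or $u_i\equiv 0$, which cannot happen since $1\le u_i\le U\le N<q$) separately; these contribute lower-order terms.

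The hard part will be making the Vandermonde/moment-vanishing argument completely rigorous when there are repeated ratios among the $x_i$, so that the support has fewer than $2\ell$ points and the naive Vandermonde matrix is not square; one must instead work with the confluent Vandermonde or simply observe that a nonzero measure on $s\le 2\ell$ points cannot have its first $s$ power moments all vanishing mod a prime $q>s$, and then check that $q$ is large enough relative to $2\ell$ (which holds since $2\ell\le k$ and $q$ is prime with $NU\le q$, forcing $q$ large in the nontrivial range). The bookkeeping that converts ``matched supports with matched $u^k$-weights'' into exactly $\ell$ applications of Lemma~\ref{lem:multcong}, with one of them saving the logarithm, is the other place where care is required.
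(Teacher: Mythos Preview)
Your moment-vanishing/Vandermonde step is correct and is the heart of the paper's argument as well: writing $x_i=n_i/u_i$, the system says the signed measure $\sum_{i\le\ell}u_i^k\delta_{x_i}-\sum_{i>\ell}u_i^k\delta_{x_i}$ has vanishing moments of orders $0,\dots,2\ell-1$, and since it is supported on at most $2\ell$ points Vandermonde forces collisions among the $x_i$. The paper phrases this a little differently (the vectors $p_j=(x_1^j,\dots,x_{2\ell}^j)$, $j=0,\dots,2\ell-1$, all lie on a hyperplane of dimension $2\ell-1$, hence are linearly dependent, hence the Vandermonde of the $x_i$ is singular), but it is the same idea.

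Where your proposal departs from the paper is in the counting, and here there is a genuine gap. From ``supports match'' you apply Lemma~\ref{lem:multcong} once per matched pair; that gives $(NU\log q)^\ell$, and your stated reason for dropping one logarithm (``the last pair is forced by the previous $\ell-1$ choices'') is not correct --- with only the ratio-matching information, the last pair still ranges over all quadruples $(n,u,n',u')$ with $nu'\equiv n'u$, which is $O(NU\log q)$, not $O(NU)$. You do have the sharper weight-matching $\sum_{x_i=v,\,i\le\ell}u_i^k\equiv\sum_{x_i=v,\,i>\ell}u_i^k$, and in the generic distinct-ratio case this pins down $u_{\sigma(i)}^k\equiv u_i^k$ and in fact removes \emph{all} the logs; but you never invoke it in the count, and in the repeated-ratio cases it does not decompose pairwise, so your ``matched supports'' bookkeeping cannot close by itself.

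The paper avoids this by arguing inductively on $\ell$. From Vandermonde it extracts only the weak consequence that \emph{some} pair $r\ne s$ has $n_r/u_r\equiv n_s/u_s$; it then freezes $(n_r,u_r,n_s,u_s)$, uses Lemma~\ref{lem:systemF} to remove those two terms from the system (so the remaining $2\ell-2$ variables are bounded by $I_{k,\ell-1}(N,U)$ after discarding two of the congruences), and applies Lemma~\ref{lem:multcong} to count the frozen quadruple. This yields $I_{k,\ell}\ll NU\log q\cdot I_{k,\ell-1}$. The missing logarithm comes from the base case: for $\ell=1$ the system has \emph{two} congruences, $u_1^k\equiv u_2^k$ and $n_1u_1^{k-1}\equiv n_2u_2^{k-1}$, so fixing $(n_1,u_1)$ determines $u_2$ up to $O(k)$ choices and then $n_2$ uniquely, giving $I_{k,1}\ll NU$ with no $\log q$. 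Iterating gives $(NU)^\ell(\log q)^{\ell-1}$. If you want to salvage your direct approach, the analogue is to use the weight condition $u_i^k\equiv u_{\sigma(i)}^k$ (not just the ratio condition) on at least one matched pair; the induction with Lemma~\ref{lem:systemF} is simply the cleanest way to package this together with the degenerate configurations.
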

\begin{proof}
We fix an integer $k$ and first consider the case $\ell=1$. Recalling that $I_{k,1}(N,U)$ counts the number of solutions to the congruences
\begin{align*}
u_1^{k}\equiv u_2^{k} \mod{q} \quad \text{and} \quad  u_1^{k-1}n_1\equiv u_2^{k-1}n_2 \mod{q},
\end{align*}
in variables 
$$M\le n_1,n_2 \le M+N, \quad 1\le u_1,u_2 \le U,$$
we fix a value of $u_1$ which determines $u_2$  with at most $k$ choices. Next we fix a value of $n_1$ which determines $n_2$ with at most $1$ choice. This implies that 
\begin{align}
\label{eq:Ik1}
I_{k,1}(N,U)\ll NU.
\end{align}
\par
We next assume there exists some integer $m$ such that 
\begin{align}
\label{eq:inequalityassumption}
I_{k,m}(N,U)\gg (NU)^{m}(\log{q})^{m-1},
\end{align}
and out of all integers $m$ satisfying~\eqref{eq:inequalityassumption} suppose $\ell$ is the smallest, so that
\begin{align}
\label{eq:inequalityassumption1}
I_{k,\ell}(N,U)\gg (NU)^{\ell}(\log{q})^{\ell-1}.
\end{align}
 Considering~\eqref{eq:Ik1}, we see that
$$\ell \ge 2.$$
\par 
Suppose the points $n=(n_1,\dots, n_{2\ell})$ and $u=(u_1,\dots,u_{2\ell})$ are a solution to~\eqref{eq:system1} satisfying~\eqref{eq:conditions1}. Let $$H(u)\subseteq \F_q^{2\ell},$$ denote the hyperplane
$$H(u)=\{ (z_1,\dots,z_{2\ell})\in \F_q^{2\ell} : z_1u_1^{k}+\dots-z_{2\ell}u_{2\ell}^{k}\equiv 0 \mod q\}.$$
Considering the system~\eqref{eq:system1}, we see that the $2\ell$ points 
$$p_j(n,u)=\left(\left(\frac{n_1}{u_1}\right)^{j}, \dots,\left(\frac{n_{2\ell}}{u_{2\ell}}\right)^{j} \right), \quad j=0,\dots,2\ell-1,$$
all lie on $H(u)$. Since $H(u)$ has dimension $2\ell-1$, this implies there exists some nontrivial linear relation among the $p_j(n,u)$. In particular, there exists $\alpha_0,\dots,\alpha_{2\ell-1}\in \F_q,$ with at least one $\alpha_i \not \equiv 0 \mod {q}$  such that 
\begin{align}
\label{eq:system1122}
\alpha_0 p_0(n,u)+\dots+\alpha_{2\ell-1}p_{2\ell-1}(n,u)\equiv 0 \mod q.
\end{align}
The equation~\eqref{eq:system1122} implies that the  Vandermonde matrix of the points
$$\frac{n_1}{u_1},\dots,\frac{n_{2\ell}}{u_{2\ell}},$$
is singular mod $q$ and hence there exists integers $1\le r,s \le 2\ell$ with $r\neq s$ such that
\begin{align}
\label{eq:furthurcondition1}
\frac{n_r}{u_r}\equiv \frac{n_s}{u_s} \mod q.
\end{align}

Letting $I_{k,\ell}(N,U,r,s)$ denote the number of solutions to the system~\eqref{eq:system1} with variables satisfying~\eqref{eq:conditions1} subject to the furthur condition~\eqref{eq:furthurcondition1},
since the pair $r,s$ can take at most $4\ell^{2}$ values, we see that 
\begin{align}
\label{eq:Iinduction1}
I_{k,\ell}(N,U)\ll I_{k,\ell}(N,U,r,s),
\end{align}
for some pair $r,s$ with $r\neq s$.
 Considering $I_{k,\ell}(N,U,r,s)$, there exists some sequence of numbers $\sigma_i\in \{-1,1\}$ for $1\le i \le 2\ell$ such that $I_{k,\ell}(N,U,r,s)$ is equal to the number of solutions to the system of congruences
\begin{align}
\label{eq:I0def}
\sum_{\substack{1\le i \le 2\ell \\ i\neq r,s}}\sigma_i n_i^{j}u_i^{k-j} \equiv \sigma_rn_r^{j}u_r^{k-j}+\sigma_s n_s^{j}u_s^{k-j} \mod q, \quad  j=0,\dots,2\ell-1,
\end{align}
with variables satisfying~\eqref{eq:conditions1} and~\eqref{eq:furthurcondition1}.
\par
 For each fixed $n_r,n_s,u_r,u_s,$ we let 
$I_0(n_r,n_s,u_r,u_s)$ count the number of solutions to the system~\eqref{eq:I0def} with varaibles satisfying
\begin{align}
M< n_i \le M+N, \quad 1\le u_i \le U, \quad i=1,\dots,2\ell, \quad i\neq r,s,
\end{align}
so that the above implies
\begin{align}
\label{eq:Iinduction}
I_{k,\ell}(N,U) \ll \sum_{\substack{M< n_r,n_s \le M+N \\ 1\le u_r,u_s \le U \\ n_ru_s \equiv n_su_r \mod q}}I_0(n_r,n_s,u_r,u_s).
\end{align}
Letting $I'_0$ denote the number of solutions to the system of congruences
\begin{align}
\label{eq:system2}
n_1^{j}u_1^{k-j}+\dots+n_{\ell-1}^ju_{\ell-1}^{k-j}\equiv n_{\ell}^{j}u_{\ell}^{k-j}+\dots+n_{2\ell-2}^ju_{2\ell-2}^{k-j} \mod q, \quad j=0,\dots,2\ell-1,
\end{align}
in variables $n_1,\dots,n_{2\ell-2},u_1,\dots,u_{2\ell-2}$ satisfying
\begin{align}
\label{eq:conditions2}
M< n_1,\dots,n_{2\ell-2} \le M+N, \quad 1\le u_1,\dots,u_{2\ell-2}\le U,
\end{align}
an application of Lemma~\ref{lem:systemF} gives
\begin{align}
\label{eq:I011111}
I_0(n_r,n_s,u_r,u_s)\le I_0',
\end{align}
for any $n_r,n_s,u_r$ and $u_s$.
Considering only equations corresponding to $$j=0,\dots,2(\ell-1)-1,$$ in the system~\eqref{eq:system2}, we see that
$$I_0'\le I_{k,\ell-1}(N,U),$$
and hence by~\eqref{eq:I011111} 
$$I_0(n_r,n_s,u_r,u_s)\ll I_{k,\ell-1}(N,U).$$
Combining the above with~\eqref{eq:Iinduction} we get 
\begin{align*}
I_{k,\ell}(N,U) \ll \left(\sum_{\substack{M< n_r,n_s \le M+N \\ 1\le u_r,u_s \le U \\ n_ru_s \equiv n_su_r \mod q}}1\right)I_{k,\ell-1}(N,U).
\end{align*}
Since the term
$$\sum_{\substack{M< n_r,n_s \le M+N \\ 1\le u_r,u_s \le U \\ n_ru_s \equiv n_su_r \mod q}}1,$$
counts the number of solutions to the congruence
$$n_ru_s \equiv n_su_r \mod{q},$$
in variables $n_r,n_s,u_r,u_s$ satisfying
$$M< n_r,n_s\le M+N, \quad 1\le u_r,u_s \le U,$$
an application of  Lemma~\ref{lem:multcong} gives 
$$\sum_{\substack{M< n_r,n_s \le M+N \\ 1\le u_r,u_s \le U \\ n_ru_s \equiv n_su_r \mod q}}1\ll NU\log{q},$$
and hence 
\begin{align*}
I_{k,\ell}(N,U) \ll I_{k,\ell-1}(N,U)NU\log{q}.
\end{align*}
Combining the above with~\eqref{eq:inequalityassumption1}, we see that
$$I_{k,\ell-1}(N,U)\gg (NU)^{\ell-1}(\log{q})^{\ell-2},$$
contradicting the assumption that $\ell$ is the smallest integer satsifying~\eqref{eq:inequalityassumption}.
\end{proof}
\section{Proof of Theorem~\ref{thm:main1}}
We proceed by induction on $N$ and fix an arbitrarily small $\varepsilon$.  Since the bound of Theorem~\ref{thm:main1} is trivial provided $N\le q^{1/(k-2)^{1/2}}$, this forms the basis of the induction. We formulate our induction hypothesis as follows. Let $M$ be an arbitrarty integer and suppose for all integers $K\le N-1$ we have
\begin{align*}
\left|\sum_{M< n \le M+K}e_q(an^{k}) \right|\le \left(\frac{q^{1/(k-2)^{1/2}}}{K}\right)^{2/k(k+1)}Kq^{\varepsilon},
\end{align*}
uniformly over $M$.
Using the above hypothesis, we aim to show
\begin{align}
\label{eq:inductionSum}
\left|\sum_{M< n \le M+N}e_q(an^{k}) \right|\le \left(\frac{q^{1/(k-2)^{1/2}}}{N}\right)^{2/k(k+1)}Nq^{\varepsilon}.
\end{align}
We define the integers $\ell,m$ and $r$ by
\begin{align}
\label{eq:elldef}
\ell = \left\lfloor \frac{k}{2} \right\rfloor,
\end{align}
\begin{align}
\label{eq:mdef}
m=\lceil \sqrt{2\ell} \rceil,
\end{align}
\begin{align}
\label{eq:rdef}
r= \frac{k(k+1)}{2},
\end{align}
and define the integers $U$ and $V$ by
\begin{align}
\label{eq:UVdef}
U=\left \lfloor \frac{rN}{2q^{1/m}} \right\rfloor, \quad V=\left\lfloor  \frac{q^{1/m}}{2r} \right\rfloor.
\end{align}
We first note that
\begin{align}
\label{eq:inductionH}
UV\le \frac{N}{4}.
\end{align}
Let $1\le u \le U$ and $1\le v \le V$ be integers and write
\begin{align*}
\sum_{M< n \le M+N}e_q(an^{k})&=\sum_{M-uv < n \le M+N-uv}e_q(a(n+uv)^{k}) \\
&=\sum_{M < n \le M+N}e_q(a(n+uv)^{k}) \\ &  +\sum_{M-uv < n \le M}e_q(a(n+uv)^{k})-\sum_{M+N-uv < n \le M+N}e_q(a(n+uv)^{k}).
\end{align*}
Averaging the above over  $1\le u \le U$ and $1\le v \le V$ , using~\eqref{eq:inductionH} and applying  our induction hypothesis, we see that
\begin{align}
\label{eq:sW}
\left|\sum_{M< n \le M+N}e_q(an^{k})\right| \le \frac{|W|}{UV}+\frac{1}{2}\left(\frac{q^{1/(k-2)^{1/2}}}{N}\right)^{2/k(k+1)}Nq^{\varepsilon},
\end{align}
where
\begin{align*}
W=\sum_{1\le u \le U}\sum_{1\le v \le V}\sum_{M < n \le M+N}e_q(a(n+uv)^{k}).
\end{align*}
\par
We have
\begin{align*}
|W| \le \sum_{1\le v \le V}\left|\sum_{1\le u \le U}\sum_{M<n \le M+N}e_q\left(a\sum_{j=0}^{k}\binom{k}{j}n^{j}u^{k-j}v^{k-j}\right)\right|,
\end{align*}
hence by H\"{o}lder's inequality 
\begin{align*}
|W|^{\ell}
&\le V^{\ell-1}\sum_{1\le v \le V}\left|\sum_{1\le u \le U}\sum_{M<n \le M+N}e_q\left(a\sum_{j=0}^{k}\binom{k}{j}n^{j}u^{k-j}v^{k-j}\right)\right|^{\ell},
\end{align*}
so that for some complex numbers $\theta_v$ with $|\theta_v|=1$ we have
\begin{align*}
|W|^{\ell}
\le V^{\ell-1}\sum_{\substack{1\le u_i \le U \\ 1\le i \le \ell}}\sum_{\substack{M<n_i \le M+N \\ 1\le i \le \ell}}\left|\sum_{1\le v \le V}\theta_ve_q\left(a\sum_{j=0}^{k-1}\binom{k}{j}(n_1^{j}u_1^{k-j}+\dots+n_{\ell}^ju_{\ell}^{k-j})v^{k-j}\right)\right|.
\end{align*}

Let 
$$W_0=\sum_{1\le u_i \le U}\sum_{M<n_i \le M+N}\left|\sum_{1\le v \le V}\theta_ve_q\left(a\sum_{j=0}^{k-1}\binom{k}{j}(n_1^{j}u_1^{k-j}+\dots+n_{\ell}^ju_{\ell}^{k-j})v^{k-j}\right)\right|,$$
so that 
\begin{align}
\label{eq:UBstep1}
|W|^{\ell}\ll V^{\ell-1}W_0.
\end{align}
Let $I(\lambda_0,\dots,\lambda_{k-1})$ denote the number of solutions to the system of congruences
\begin{align*}
\binom{k}{j}(n_1^{j}u_1^{k-j}+\dots+n_{\ell}^ju_{\ell}^{k-j})\equiv \lambda_j \mod q, \quad j=0,\dots,k-1,
\end{align*}
in variables $n_1,\dots,n_{\ell},u_1,\dots,u_{\ell}$ satisfying
$$M< n_1,\dots,n_{\ell} \le M+N, \quad 1\le u_1,\dots,u_{\ell}\le U.$$
The above implies that
\begin{align*}
W_0=\sum_{\lambda_0,\dots,\lambda_{k-1}=0}^{q-1}I(\lambda_0,\dots,\lambda_{k-1})\left|\sum_{1\le v\le V}\theta_ve_q\left(a\sum_{j=0}^{k-1}\lambda_j v^{k-j}\right)\right|.
\end{align*}
 Two applications of H\"{o}lder's inequality gives
\begin{align}
\label{eq:W0b}
\nonumber W_0^{2r} &\le  \left(\sum_{\lambda_0,\dots,\lambda_{k-1}=0}^{q-1}I(\lambda_0,\dots,\lambda_{k-1}) \right)^{2r-2}\left(\sum_{\lambda_0,\dots,\lambda_{k-1}=0}^{q-1}I(\lambda_0,\dots,\lambda_{k-1})^2 \right) \\ & \times \left(\sum_{\lambda_0,\dots,\lambda_{k-1}=0}^{q-1}\left|\sum_{1\le v\le V}\theta_v e_q\left (\sum_{j=0}^{k-1}\lambda_j v^{k-j}\right)\right|^{2r}\right).
\end{align}
The term
\begin{align*}
\sum_{\lambda_0,\dots,\lambda_{k-1}=0}^{q-1}\left|\sum_{1\le v\le V}\theta_v e_q\left(\sum_{j=0}^{k-1}\lambda_j v^{k-j}\right)\right|^{2r},
\end{align*}
is bounded by $q^{k}$ times the number of solutions to the system of congruences
$$v_1^{j}+\dots+v_{r}^{j}\equiv v_{r+1}^{j}+\dots+v_{2r}^{j} \mod q, \quad j=1,\dots,k,$$
in variables $v_1,\dots,v_{2r}$ satisfying
$$1\le v_1,\dots,v_{2r}\le V.$$
Recalling the choice of $V$ in~\eqref{eq:UVdef} and applying Lemma~\ref{lem:VMVTq}, we see that
\begin{align}
\label{eq:I0}
\sum_{\lambda_0,\dots,\lambda_{k-1}=0}^{q-1}\left|\sum_{1\le v\le V}\theta_v e_q\left(\sum_{j=0}^{k-1}\lambda_j v^{k-j}\right)\right|^{2r}\le q^{k}J_{r,k}(V;q)\le q^{k}V^{2r-mk+m(m-1)/2+o(1)}.
\end{align}
We have
\begin{align}
\label{eq:I1}
\sum_{\lambda_0,\dots,\lambda_{k-1}=0}^{q-1}I(\lambda_0,\dots,\lambda_{k-1})=(NU)^{\ell},
\end{align}
and the term
\begin{align*}
\sum_{\lambda_0,\dots,\lambda_{k-1}=0}^{q-1}I(\lambda_0,\dots,\lambda_{k-1})^2,
\end{align*}
is equal to the number of solutions to the system of congruences
\begin{align}
\label{eq:Is}
n_1^{j}u_1^{k-j}+\dots+n_{\ell}^ju_{\ell}^{k-j}\equiv n_{\ell+1}^{j}u_{\ell+1}^{k-j}+\dots+n_{2\ell}^ju_{2\ell}^{k-j} \mod q, \quad j=0,\dots,k-1,
\end{align}
in variables $n_1,\dots,n_{2\ell},u_1,\dots,u_{2\ell}$ satisfying
$$M< n_1,\dots,n_{2\ell} \le M+N, \quad 1\le u_1,\dots,u_{2\ell}\le U.$$
Recalling~\eqref{eq:elldef} and considering only equations corresponding to $j=0,\dots,2\ell-1$ in~\eqref{eq:Is}, we see that
\begin{align*}
\sum_{\lambda_0,\dots,\lambda_{k-1}=0}^{q-1}I(\lambda_0,\dots,\lambda_{k-1})^2\le I_{k,\ell}(N,U),
\end{align*}
where $I_{k,\ell}(N,U)$ is defined as in Lemma~\ref{lem:mv11}. Recalling~\eqref{eq:Nassumption}, we have
\begin{align}
\label{eq:I2}
\sum_{\lambda_0,\dots,\lambda_{k-1}=0}^{q-1}I(\lambda_0,\dots,\lambda_{k-1})^2\le I_{k,\ell}(N,U)\le (NU)^{\ell+o(1)}.
\end{align}
Combining~\eqref{eq:W0b}~\eqref{eq:I0},~\eqref{eq:I1} and~\eqref{eq:I2} gives
\begin{align*}
W_0^{2r}\le q^{k+o(1)}(NU)^{\ell(2r-1)}V^{2r-mk+m(m-1)/2},
\end{align*}
and hence by~\eqref{eq:UBstep1}
\begin{align*}
|W|^{\ell}\ll q^{k/2r+o(1)}V^{\ell-mk/2r+m(m-1)/4r}(NU)^{\ell(1-1/2r)}.
\end{align*}
Combining the above with~\eqref{eq:sW} gives
\begin{align*}
\left|\sum_{M< n \le M+N}e_q(an^{k})\right| &\le \left(\frac{q^{k}}{V^{mk-m(m-1)/2}}\right)^{1/2r\ell}\frac{N^{1-1/2r}q^{o(1)}}{U^{1/2r}} \\ & \quad \quad \quad \quad +\frac{1}{2}\left(\frac{q^{1/(k-2)^{1/2}}}{N}\right)^{2/k(k+1)}Nq^{\varepsilon},
\end{align*}
which on recalling the choice of $U$ and $V$ in~\eqref{eq:UVdef} the above simplifies to
\begin{align*}
\left|\sum_{M< n \le M+N}e_q(an^{k})\right|  &\le q^{((m-1)/2\ell+1/m)/2r+o(1)}N^{1-1/r} \\ & \quad+\frac{1}{2}\left(\frac{q^{1/(k-2)^{1/2}}}{N}\right)^{2/k(k+1)}Nq^{\varepsilon}.
\end{align*}
Recalling the choice of $\ell,$ $m$ and $r$ in~\eqref{eq:elldef},~\eqref{eq:mdef} and~\eqref{eq:rdef} we get
\begin{align*}
\left|\sum_{M< n \le M+N}e_q(an^{k})\right| \le & \left(\frac{q^{1/(k-2)^{1/2}}}{N}\right)^{2/k(k+1)}Nq^{o(1)} \\ &+\frac{1}{2}\left(\frac{q^{1/(k-2)^{1/2}}}{N}\right)^{2/k(k+1)}Nq^{\varepsilon},
\end{align*}
and hence 
\begin{align*}
\left|\sum_{M< n \le M+N}e_q(an^{k})\right| \le \left(\frac{q^{1/(k-2)^{1/2}}}{N}\right)^{2/k(k+1)}Nq^{\varepsilon},
\end{align*}
by taking the term $o(1)$ in $q^{o(1)}$ to be sufficiently small.

\end{document}